\documentclass[11pt,letterpaper]{amsart}
\usepackage[utf8]{inputenc}
\usepackage{verbatim}
\usepackage{amsmath,amssymb,amsfonts,amsthm}
\usepackage{bigints}
\usepackage{hyperref}
\usepackage{array}
\numberwithin{equation}{section}

\usepackage{verbatim}
\usepackage{amsmath,amssymb,amsfonts,amsthm}
\usepackage{bigints}
\usepackage{hyperref}
\usepackage{array}

\usepackage{amsthm}

\usepackage{mathtools}

\mathtoolsset{showonlyrefs}

\newtheorem{definition}{Definition}[section]
\newtheorem{proposition}[definition]{Proposition}
\newtheorem{theorem}[definition]{Theorem}
\newtheorem{conjecture}[definition]{Conjecture}

\newtheorem{lemma}[definition]{Lemma}

\newcommand\R{\mathbb{R}}
\newcommand\N{\mathbb{N}}
\newcommand\Z{\mathbb{Z}}

\newcommand{\norm}[1]{ \left\lVert#1\right\rVert}

\newcommand{\spec}{\operatorname{spec}}

\title{Lacunary}
\date{}

\begin{document}

\author{Miquel Saucedo}
\address{M.  Saucedo,  Centre de Recerca Matemàtica\\
Campus de Bellaterra, Edifici C
08193 Bellaterra (Barcelona), Spain}
\email{miquelsaucedo98@gmail.com }

\author{Sergey Tikhonov}
\address{S. Tikhonov, 
ICREA, Pg. Lluís Companys 23, 08010 Barcelona, Spain;
Centre de Recerca Matem\`{a}tica\\
Campus de Bellaterra, Edifici C
08193 Bellaterra (Barcelona), Spain;
 and Universitat Autònoma de Barcelona.}
\email{stikhonov@crm.cat}

\title[A Logvinenko-Sereda theorem for lacunary spectra ]{A Logvinenko-Sereda theorem for 
 lacunary spectra
}

\keywords{Fourier transform,  uncertainty principles}

\subjclass{Primary: 42B10,42A55}

\thanks{This work is supported by
 PID2023-150984NB-I00 funded by MICIU/AEI/10.13039/501100011033/ FEDER, EU, 
the CERCA Programme of the Generalitat de Catalunya and the Severo Ochoa, and Mar\'ia de Maeztu
Program for Centers and Units of Excellence in R\&d (CEX2020-001084-M).
M. Saucedo is supported by  the Spanish Ministry of Universities through the FPU contract FPU21/04230.
 S. Tikhonov is supported
by 2021 SGR 00087. }

\begin{abstract}
For a function $F$ represented as
$F(x)=\sum_{n=0}^\infty{f_n (x) e^{2 \pi i \lambda_n x}},$ 
where
 each 
 $f_n$
 satisfies
$\spec(f_n) \subset [0, 1]$ and 
$(\lambda_n)_{n\geq 0}\subset \R_+$ is a lacunary sequence, we obtain
$$ \|F\|_{L^2(\mathbb{R})}\lesssim
\|F\chi_{E}\|_{L^2(\mathbb{R})}
$$
provided that $E$ is
a thick subset of $\mathbb{R}$.
This extends the Logvinenko-Sereda theorem and answers 
a
question posed by 
Kovrizhkin for functions  with positive frequencies.
\end{abstract}
\maketitle
\section{Introduction}
The uncertainty principle in Fourier analysis forbids a function and its Fourier
transform from being simultaneously concentrated on small sets. There are many results of this type, see for instance the book \cite{HavinJoricke}.

In this work, we consider quantitative statements of the uncertainty principle  of the type
\begin{equation}
\int_{\mathbb{R}} |f|^2
\le C\left(
\int_E |f|^2
+
\int_F |\widehat f|^2
\right) \mbox{ for } f\in L^2(\R);
\end{equation}
or, equivalently,
\begin{equation}\label{ls}
\int_{\mathbb{R}} |f|^2
\le C \int_E |f|^2 \mbox{ for } \widehat{f}\in L^2(F^c),
\end{equation}
where $E$ and $F$ are the complements of ``small'' sets. Observe that these inequalities imply, in particular, that if $f\in L^2(\R)$ is supported on $E^{c}$ and $\widehat f$ is supported
on $F^{c}$, then $f \equiv 0$.

The well-known 
Logvinenko--Sereda result characterizes the sets $E$ for which inequality \eqref{ls} holds for $F^c=[0,1]$; see, e.g., \cite[Chapter 3, \S 4]{HavinJoricke}.
\begin{theorem}\label{th:logvinenko}
 Let $F^c=[0,1]$. Then inequality \eqref{ls} holds for a measurable set $E$ if and only if $E$ is ($\Delta,\gamma$)-\textit{thick} (or, simply, thick), that is, if and only if
there exist $\Delta,\gamma>0$ such that
for every interval $I$ of length $\Delta$,
$$|E \cap I| \geq \gamma |I|.$$ Moreover, one can take $C:=C(\gamma,\Delta)$ in inequality \eqref{ls}.
\end{theorem}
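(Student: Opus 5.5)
We prove both directions of Theorem \ref{th:logvinenko}, taking $\widehat f$ supported in $[0,1]$.

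\emph{Necessity.} Assume \eqref{ls} holds for $E$ with some constant $C$, and suppose $E$ is not thick. Then for every $\Delta$ and every $\gamma>0$ there is an interval $I$ of length $\Delta$ with $|E\cap I|<\gamma\Delta$. Fix a Schwartz function $\varphi$ with $\widehat\varphi$ supported in $[0,1]$ and $\|\varphi\|_{L^2}=1$. Choose $\Delta$ large enough that $\int_{|x|>\Delta/2}|\varphi|^2<1/(4C)$. Next choose $\gamma$ so small that $\gamma\Delta\|\varphi\|_\infty^2<1/(4C)$, and let $I=[a-\Delta/2,a+\Delta/2]$ be an interval with $|E\cap I|<\gamma\Delta$. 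Set $f=\varphi(\cdot-a)$; translation only modulates $\widehat f$, so $\widehat f$ is still supported in $[0,1]$. Then
\[
\int_E|f|^2\le \|\varphi\|_\infty^2\,|E\cap I|+\int_{|x|>\Delta/2}|\varphi|^2<\frac{1}{2C},
\]
which contradicts $1=\|f\|^2\le C\int_E|f|^2$.

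\emph{Sufficiency.} We use Kovrijkine's local analytic argument. Partition $\R$ into intervals $I_k$ of length $\Delta$. Since $\widehat f\subset[0,1]$, Bernstein's inequality gives $\|f^{(n)}\|_2\le(2\pi)^n\|f\|_2$ for every $n$. Call $I_k$ \emph{bad} if
\[
\|f^{(n)}\|_{L^2(I_k)}\ge A^n(2\pi)^n\|f\|_{L^2(I_k)}\quad\text{for some } n\ge1.
\]
Summing over $k$ and $n$,
\[
\sum_{\text{bad}}\|f\|_{L^2(I_k)}^2\le\sum_n A^{-2n}\|f\|_2^2\le \tfrac12\|f\|_2^2
\]
once $A=2$. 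So the good intervals carry at least half of the $L^2$ mass.

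On a good interval $I$, the Sobolev embedding yields a point $x_0\in I$ with $|f^{(n)}(x_0)|\lesssim (4\pi)^n\Delta^{-1/2}\|f\|_{L^2(I)}$ for all $n$. Indeed, for each $n$ the set where $|f^{(n)}|$ is large has small measure, and a choice weighted by $2^{-n}$ lets us pick a single point working for every $n$. Taylor expansion then shows that $f$ extends analytically to the disc $D(x_0,5\Delta)$ with
\[
\sup_{D(x_0,5\Delta)}|f|\le M\,\Delta^{-1/2}\|f\|_{L^2(I)},\qquad M=e^{C\Delta}.
\]
We also have $\sup_I|f|\ge \Delta^{-1/2}\|f\|_{L^2(I)}$.

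Now apply the Remez-type lemma for analytic functions. If $g$ is analytic on $D(0,5)$, $\sup_{D(0,5)}|g|\le M\sup_{[0,1]}|g|$, and $S\subset[0,1]$ is measurable, then
\[
\sup_{[0,1]}|g|\le (C/|S|)^{\log M/\log 2}\,\sup_S|g|.
\]
This lemma is the main obstacle. It is proved by dividing $g$ by a Blaschke product over its zeros in $D(0,2)$, whose number is at most $\log M/\log 2$ by Jensen's formula. The quotient is bounded above and below by Harnack's inequality, and the polynomial factor is handled by the classical Remez/Cartan bound. Applying the lemma on a good $I$ with $S=\{x\in E\cap I: |f(x)|\ \text{small}\}$ removed appropriately, i.e.\ using the $L^2$ version obtained by discarding the subset of $E\cap I$ of measure $\gamma\Delta/2$ where $|f|$ is smallest, gives
\[
\|f\|_{L^2(I)}^2\le (C/\gamma)^{C\Delta}\int_{E\cap I}|f|^2.
\]
Summing over the good intervals completes the proof, with $C(\gamma,\Delta)=2(C/\gamma)^{C\Delta}$.
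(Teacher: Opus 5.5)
Your proof is correct. It is not the paper's route, because the paper gives no proof of Theorem \ref{th:logvinenko}: it quotes the classical result from \cite{HavinJoricke} and uses it only as a black box, e.g.\ in Proposition \ref{vspom}, applied on the set $\Xi$ with parameters $(\bar\gamma,\bar\Delta)$.

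Your necessity argument is the standard one and is complete: you translate a fixed Schwartz $\varphi$ with $\widehat\varphi$ supported in $[0,1]$ onto an interval where $E$ has density below $\gamma$.

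Your sufficiency argument is Kovrijkine's quantitative proof:
\begin{itemize}
\item Bernstein's inequality shows that the intervals on which some derivative is locally too large carry at most a third of the $L^2$ mass.
\item On the remaining intervals, an averaging choice of a point controls all derivatives at once.
\item Taylor expansion then bounds the entire extension on a disc of radius $\asymp\Delta$ by $e^{C\Delta}$ times the local $L^2$ average.
\item A Remez-type inequality for analytic functions transfers the bound from $I$ to $E\cap I$. It uses Jensen to count zeros, Harnack for the zero-free quotient, and the polynomial Remez inequality for the rest.
\end{itemize}
Compared with the cited qualitative statement, this gives the explicit constant $C(\gamma,\Delta)=(C/\gamma)^{C(\Delta+1)}$. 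It is polynomial in $1/\gamma$, which is essentially optimal.

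Some details need tidying; none is serious.
\begin{itemize}
\item[(i)] The choice of $x_0$ is an averaging (Chebyshev) argument, not a Sobolev embedding, and it costs a geometric factor. On a good interval, $\int_I\sum_{n\ge0}(8\pi)^{-2n}|f^{(n)}|^2\le\frac43\|f\|_{L^2(I)}^2$, which gives $|f^{(n)}(x_0)|\le 2(8\pi)^n\Delta^{-1/2}\|f\|_{L^2(I)}$ rather than a bound with $(4\pi)^n$. This only changes $C$ in $M=e^{C\Delta}$.
\item[(ii)] After rescaling $I=[a,a+\Delta]$ to $[0,1]$, your lemma needs a bound on $D(a,5\Delta)$. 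That disc is not contained in $D(x_0,5\Delta)$, so take the Taylor bound on $D(x_0,6\Delta)$ instead; this is free since $f$ is entire. In the proof of the lemma, apply Jensen's formula around a point $y\in[0,1]$ with $|g(y)|=\sup_{[0,1]}|g|$, so that $D(y,4)\subset D(0,5)$. Do not center at $0$, where $|g|$ may be tiny.
\item[(iii)] The phrase about discarding the part of $E\cap I$ where $|f|$ is smallest has the direction reversed: you must keep the small values. Take $S=\{x\in E\cap I:\ |f(x)|^2\le \frac{2}{|E\cap I|}\int_{E\cap I}|f|^2\}$; by Chebyshev, $|S|\ge|E\cap I|/2\ge\gamma\Delta/2$. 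Then $\|f\|_{L^2(I)}^2\le\Delta\sup_I|f|^2\le\Delta(C/\gamma)^{C(\Delta+1)}\sup_S|f|^2\le\frac{2}{\gamma}(C/\gamma)^{C(\Delta+1)}\int_{E\cap I}|f|^2$.
\item[(iv)] Since $\log M=C\Delta+O(1)$, the exponent is $C(\Delta+1)$, not $C\Delta$. This matters only for small $\Delta$.
\end{itemize}
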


The Logvinenko-Sereda theorem has been generalized in several ways:
\begin{enumerate}
    \item[(i)] In \cite{kovrinolac}, Kovrizhkin showed that Theorem \ref{th:logvinenko} holds true for $F^c=\cup_{n=1}^N I_n$, where $I_j$ are intervals of length $1$, and with $C:=C(\gamma, \Delta, N)$.
    \item[(ii)] In \cite{Kovrizhkin}, he extended Theorem \ref{th:logvinenko} to the case where $F^c$ is a lacunary set of intervals; under the additional conditions that $\gamma$ is sufficiently close to $1$ or that the intervals are sufficiently small. Moreover, he conjectured that these additional conditions are not needed. More precisely, he formulated the following conjecture.
\begin{conjecture}\label{conj:Kovrizhkin}
Let $\Lambda=\{\lambda_k\}_{k=1}^{\infty}$ be Zygmund lacunary. Then Theorem \ref{th:logvinenko} holds true for $$F^c=\bigcup_{k=1}^{\infty}
\left[\lambda_k,\;\lambda_k+1\right].$$
\end{conjecture}
We recall that a sequence $\Lambda\subset \mathbb{R}$ is said to be Zygmund lacunary  if 
$$\sup_{k\neq l}\# \left\{ (k',l'), |\lambda_{k} -\lambda_l -(\lambda_{k'}-\lambda_{l'})|\leq 1\right\}=N<\infty.$$
\item[(iii)] In \cite{jaye1}, the authors generalized Theorem \ref{th:logvinenko} to a wide class of sparse sets $F^c$ under the additional condition that $E$ is a $\sigma$-neighborhood of a thick set.
\item[(iv)]
Finally, it is worth mentioning that there are also works addressing extensions of Theorem~\ref{th:logvinenko} with a geometric component, such as \cite{jaye2} and \cite{jaye2025highfrequencyuncertaintyprinciplefourierbessel}.

%Finally, it is worth mentioning that there are also works addressing  extensions of Theorem \ref{th:logvinenko} with a geometric component, such as \cite{jaye2} and \cite{jaye2025highfrequencyuncertaintyprinciplefourierbessel}.

\end{enumerate} 

In this paper we confirm Conjecture \ref{conj:Kovrizhkin} a class of lacunary sequences.
\subsection{Main result}
Before formulating our main theorem we need the following definition.
\begin{definition}[Strong Zygmund lacunary]
We say that a sequence $\Lambda=
(\lambda_k)_{k=1}^\infty \subset \R$
 is (M,N)-strong Zygmund lacunary (or, simply, strong Zygmund lacunary) if there exists a function $M: \N\to \N$ such that for any $L\in \N $ the sequence $$(\lambda_k/L)_{|k| \geq M(L)}$$ is Zygmund lacunary with constant $N$ independent of $L$, that is, if 
$$\sup_{\substack{k\neq l\\k,l\geq M(L)}}\# \left\{ (k',l'), |\lambda_{k} -\lambda_l -(\lambda_{k'}-\lambda_{l'})|\leq L\right\}=N<\infty.$$
\end{definition}
Note that all Hadamard lacunary sequences, that is, those satisfying the condition $\lambda_{k+1}/\lambda_k\geq q>1$, are strong Zygmund lacunary. On the other hand, there are strong Zygmund lacunary sequences which are not Hadamard lacunary; see Proposition \ref{pr:nothadamard}.

 After giving this definition, we are in a position to formulate our main result.
 \begin{theorem}
 \label{theorem:mainth}
     Conjecture \ref{conj:Kovrizhkin} is true for any strong Zygmund lacunary sequence $\Lambda$ under the additional condition that $\Lambda \subset \mathbb{R}_+$.
 \end{theorem}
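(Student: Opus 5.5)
Write $F=\sum_{k} f_k e^{2\pi i\lambda_k x}$ with $\spec(f_k)\subset[0,1]$. The necessity of thickness follows from Theorem \ref{th:logvinenko} applied to a single block, so only sufficiency needs proof. The plan is to separate low and high frequencies using the strong Zygmund lacunarity at a large scale $L$. Given a thick set $E$, I first fix a large integer $L$, to be chosen depending only on $\gamma,\Delta,N$, and split
\[ F=P+G,\qquad P=\sum_{k<M(L)} f_k e^{2\pi i\lambda_k x},\qquad G=\sum_{k\ge M(L)} f_k e^{2\pi i\lambda_k x}. \]
The function $P$ has spectrum in a union of $M(L)$ unit intervals, so Kovrizhkin's result (i) applies to it. The function $G$ is a lacunary-type sum.

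The key step is a local almost-orthogonality estimate for $G$: on every interval $I$ of length $\Delta$ (or a fixed multiple of it), I want
\[ \int_I |G|^2\approx \sum_{k\ge M(L)}\int_I |f_k|^2 . \]
To get this, I would replace $\chi_I$ by a smooth bump $\phi_I$ whose Fourier transform is essentially supported in $[-L,L]$, at the cost of enlarging $I$ by $O(1/L)$ tails. Expanding $\int |G|^2\phi_I$, the cross terms $k\ne l$ involve $\widehat{\phi_I}$ evaluated near $\lambda_k-\lambda_l$, and these are negligible unless $|\lambda_k-\lambda_l|\lesssim L$. Lacunarity of the tail forces the difference set to be sparse at scale $L$: each difference $\lambda_k-\lambda_l$ is $L$-close to at most $N$ others. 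A Schur test then bounds the off-diagonal part by a small multiple of the diagonal once $L$ is large.

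I would actually use this to prove something stronger, namely $\int_{E\cap I}|G|^2 \gtrsim \gamma \sum_k\int_I|f_k|^2$. For each $k$, Theorem \ref{th:logvinenko} (or a local Turán/Nazarov-type inequality for functions of exponential type) gives that $|f_k|^2$ is not concentrated off $E$. Positivity $\Lambda\subset\R_+$ is essential here. Each $f_ke^{2\pi i\lambda_k x}$ has positive frequencies, so $G$ extends analytically to the upper half-plane, and a Hardy-space/Nazarov-type argument on the combined function should apply. My plan is to view $G$ locally as a function whose spectrum is essentially a few unit intervals after the Schur reduction, and then apply (i) with $N$-dependent constants.

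Finally, I combine the two parts. Write $\int_E|F|^2\ge \frac12\int_E|G|^2-\int_E|P|^2$. The risk is that $P$ and $G$ cancel on $E$, and I would handle this by treating $P$ as one more block and running the almost-orthogonality over the combined family; $P$ has bounded spectral width depending on $L$, which is fixed. Summing over a partition into intervals $I$ then yields $\|F\|_2^2\lesssim\int_E|F|^2$.

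The main obstacle is the local lower bound on $E\cap I$ for the lacunary tail with $\gamma$ arbitrary, rather than close to $1$ as in Kovrizhkin's earlier result (ii). Orthogonality alone cannot beat the mass of $E^c$. I expect to need the analyticity from $\Lambda\subset\R_+$, via a logarithmic-integral or Remez-type estimate for analytic functions in the upper half-plane applied to the whole sum, to obtain a constant depending only on $\gamma,\Delta,N$.
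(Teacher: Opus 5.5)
Your splitting $F=P+G$ into a finite head and a tail beyond $M(L)$ matches the paper's. However, both steps that actually carry the proof are left open, and the remedies you sketch would not close them.

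The first gap is the local lower bound $\int_{E\cap I}|G|^2\gtrsim\sum_k\int_I|f_k|^2$ for arbitrary $\gamma$, which you flag as the main obstacle. None of your tools reach it:
\begin{itemize}
\item Smooth-bump orthogonality says nothing once $\phi_I$ is replaced by $\chi_{E\cap I}$, whose Fourier transform is not localized.
\item Logvinenko--Sereda for each $f_k$ separately ignores the cross terms.
\item Result (i) cannot be applied to $G$, which has infinitely many spectral blocks that can all be non-negligible on an interval of length $\Delta$.
\item A two-constants/Remez bound for the whole analytic function only controls a Poisson-smoothed version of $F$, which damps the tail exponentially.
\end{itemize}
The missing idea is to work on intervals $I$ of length $1/L$ rather than $\Delta$. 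There each $f_k$ is nearly constant, so you freeze $f_k(x)$ at a suitable point $y\in I$ and rescale $I$ to $[0,1]$. You then apply Nazarov's inequality (Theorem \ref{th:nazarov}) for Zygmund-lacunary exponential sums on a set of positive measure to the frequencies $(\lambda_k/L)_{k\ge M(L)}$. Strong Zygmund lacunarity is exactly what makes the resulting constant depend only on $\gamma$ and $N$, uniformly in $L$. The freezing error is $O(L^{-1/2})\int_I\sum_k(|f_k|^2+|f_k'|^2)$, via Plancherel--P\'olya on the difference set $\{\lambda_n-\lambda_m\}_{n\ne m}$. Bernstein's inequality makes this negligible for large $L$. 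The intervals of length $1/L$ on which $E$ has density at least $\gamma/2$ form a thick set $\Xi$. Logvinenko--Sereda applied to each $f_k$ on $\Xi$ then gives $\int_E|G|^2\gtrsim\sum_k\|f_k\|_2^2$. This tail estimate does not use $\Lambda\subset\R_+$ at all, so your claim that positivity is essential there is misplaced.

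The second gap is the head--tail interaction. Knowing $\|P\|_2\lesssim\|P\chi_E\|_2$ from (i) and $\|G\|_2\lesssim\|G\chi_E\|_2$ separately gives no lower bound on $\|F\chi_E\|_2$, since $P\chi_E$ may nearly cancel $G\chi_E$. ``Treating $P$ as one more block'' fails as well. $P$ has spectral width of order $\max_{k<M(L)}\lambda_k$, possibly much larger than $L$, so it is not nearly constant at scale $1/L$ and its frequencies do not join the lacunary family.

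This is where the paper uses positivity. Since $\spec F\subset[0,\infty)$, the two-constants theorem gives $\|\mathcal{P}F\|_2\le 2\|F\|_2^{1-\sigma}\|F\chi_E\|_2^{\sigma}$, where $\widehat{\mathcal{P}F}(\xi)=e^{-|\xi|}\widehat F(\xi)$. Because $P$ has bounded spectrum essentially disjoint from that of $G$, $\|P\|_2\lesssim_L\|\mathcal{P}F\|_2$. Normalizing $\|F\|_2=1$, this yields $\|P\|_2\lesssim\|F\chi_E\|_2^{\sigma}$. Then $\|G\|_2\lesssim\|G\chi_E\|_2\le\|F\chi_E\|_2+\|P\|_2$, and $1\le\|P\|_2+\|G\|_2$ forces $\|F\chi_E\|_2\gtrsim 1$. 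In particular, Kovrizhkin's result (i) is not needed.
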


\subsection{Structure of the paper}
After the proof of Theorem~\ref{theorem:mainth} in Section~2, 
we show in Section \ref{uniq sets} that any open set is a set of uniqueness for functions whose spectrum lies in 
$
 \bigcup_{n \in \mathbb{N}} 
[-\lambda_n - 1, -\lambda_n] \cup [\lambda_n, \lambda_n + 1],
$
under suitable lacunarity conditions on the sequence $(\lambda_n)$.

Finally, in Section~\ref{section:4}, 
we prove that the class of strong Zygmund lacunary sequences is strictly intermediate between the classes of Hadamard and Zygmund lacunary sequences.

\section{Proof of Theorem \ref{theorem:mainth}}
Our main auxiliary result is the following
\begin{lemma}\label{lemmamain}
Let $\Lambda$ be a strong Zygmund lacunary sequence. Let $L\geq 1, 0 < \gamma < 1$. Let $E\subset \mathbb{R}$ be a set of positive measure and $I$ an interval with $|I | = {L^{-1}}$ and
$$|E \cap I| \geq {{\gamma}\over {L}}.$$
Then there are two constants $C_1(\gamma)$ and $C_2$ such that, for any sequence of smooth functions $f_n$, \begin{align*}
    \int_{I \cap E} {\bigg| \sum_{|n|\geq M(L)} f_n(x) e^{2 \pi i \lambda_n x} \bigg|^2 dx} &\geq C_1(\gamma) \int_{I}{\sum_{|n|\geq M(L)}{|f_n(x)|^2}} \\&- {{C_2}\over{L^{\frac12}}} \int_{I}{\sum_{|n|\geq M(L)}{\Big(|f_n(x)|^2 + |f_n'(x)|^2\Big)dx}}.
    \end{align*}
\end{lemma}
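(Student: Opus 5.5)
The plan is to freeze the coefficients $f_n$ at a point of $I$, rescale $I$ to unit length, and reduce the lemma to a lower bound for lacunary exponential sums with constant coefficients on a set of proportional measure.

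\emph{Step 1: freezing.} Fix $x_0\in I$ and set $c_n=f_n(x_0)$. For $x\in I$, Cauchy--Schwarz gives
\[
|f_n(x)-c_n|^2\le |I|\int_I|f_n'|^2=L^{-1}\int_I|f_n'|^2 .
\]
Write $P(x)=\sum_{|n|\ge M(L)} c_n e^{2\pi i\lambda_n x}$ and $R=\sum_n (f_n-c_n)e^{2\pi i\lambda_n x}$. Using $|a+b|^2\ge \tfrac12|a|^2-|b|^2$, it suffices to bound $\int_{I\cap E}|P|^2$ from below and $\int_I|R|^2$ from above.

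For $R$, I would not use the triangle inequality termwise. Instead I would use the almost-orthogonality $\int_I|\sum_n g_n e^{2\pi i\lambda_n x}|^2\lesssim_N \int_I\sum_n (|g_n|^2+L^{-2}|g_n'|^2)$. This follows by expanding $g_n$ in a Fourier series on a slightly enlarged interval and using that, for $|n|\ge M(L)$, the frequencies $\lambda_n$ are $L$-separated up to multiplicity $N$; that separation is the $k'=l'$ part of the strong Zygmund condition. Combined with the freezing estimate, this gives $\int_I|R|^2\lesssim_N L^{-1}\int_I\sum|f_n'|^2$. The same estimate shows that $|I|\sum|c_n|^2$ and $\int_I\sum|f_n|^2$ differ by at most $O(L^{-1/2})\int_I\sum(|f_n|^2+|f_n'|^2)$. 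All such errors go into the $C_2L^{-1/2}$ term.

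\emph{Step 2: rescaling.} Put $x=x_I+y/L$ with $y\in[0,1]$. Then $E\cap I$ becomes a set $E'\subset[0,1]$ with $|E'|\ge\gamma$, and the frequencies become $\mu_n=\lambda_n/L$ for $|n|\ge M(L)$. By the definition of strong Zygmund lacunarity, $(\mu_n)$ is Zygmund lacunary with constant $N$, uniformly in $L$. So the core claim is the following: for every Zygmund lacunary $(\mu_n)$ with constant $N$, every $E'\subset[0,1]$ with $|E'|\ge\gamma$, and every finitely supported $(c_n)$,
\[
\int_{E'}\Big|\sum c_ne^{2\pi i\mu_n y}\Big|^2dy\ge C_1(\gamma,N)\sum|c_n|^2 .
\]

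\emph{Step 3: the core claim, which is the main obstacle.} The Zygmund condition gives the two-sided bound $\int_0^1|P|^2\asymp_N\sum|c_n|^2$. It also gives the $\Lambda(4)$ bound $\int_0^1|P|^4\lesssim_N(\sum|c_n|^2)^2$, by counting solutions of $\mu_k-\mu_l\approx\mu_{k'}-\mu_{l'}$. By Paley--Zygmund, $|P|^2\gtrsim\sum|c_n|^2$ holds on a set of measure $\ge\delta(N)$. However, this argument only works when $\gamma>1-\delta$, which is exactly Kovrizhkin's restriction, so it cannot give the claim for all $\gamma$.

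To handle arbitrary $\gamma$, I would argue by contradiction and compactness. Suppose sequences $P_j$ with $\sum|c_n^{(j)}|^2=1$ had $\int_{E_j'}|P_j|^2\to0$. Split each $P_j$ into a part with bounded frequencies and a high-frequency lacunary tail. The bounded part converges along a subsequence to an entire function of exponential type; the tail is asymptotically orthogonal to it and, by the $\Lambda(4)$ bound, equidistributed. The limit would then vanish on a set of positive measure, contradicting analyticity.

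This is the step where I would use the hypothesis $\Lambda\subset\R_+$. With positive frequencies, $P$ is the boundary value of a function bounded and analytic in the upper half-plane. One can then use $\log|P|$ subharmonicity, or Jensen/Nazarov-type estimates, to pass from "$|P|$ is small on a set of measure $\gamma$" to "$|P|$ is small on all of $[0,1]$". That contradicts $\int_0^1|P|^2\gtrsim\sum|c_n|^2$, and would give $C_1$ depending only on $\gamma$ (and $N$).

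I expect this last step, uniformity of the Remez-type inequality for analytic lacunary sums with unboundedly many terms, to be the genuine difficulty. Steps 1--2 are routine bookkeeping.
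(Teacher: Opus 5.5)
\textbf{The gap is Step 3, the ``core claim''.} Your Steps 1--2 are sound and follow the paper's structure: freeze at a point of $I$, rescale by $L$, and use that $(\lambda_n/L)_{|n|\ge M(L)}$ is Zygmund lacunary with constant $N$. If you choose $x_0$ so that $\sum|f_n(x_0)|^2$ is at least its average over $I$, the comparison with $\int_I\sum|f_n|^2$ becomes immediate. The paper freezes the products $f_n\overline{f_m}$ rather than the $f_n$. It then controls the remainder by Fubini plus Plancherel--P\'olya applied to $\widehat{\chi_{E\cap[0,t]}}$ at the differences $\lambda_n-\lambda_m$. Your almost-orthogonality bound for $R$ also works. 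The problem is that your core claim is exactly Nazarov's theorem (Theorem~\ref{th:nazarov}): $\int_{E'}|\sum c_n e^{2\pi i\mu_n y}|^2\,dy\ge C(\gamma,N)\sum|c_n|^2$ for Zygmund lacunary $(\mu_n)$ and $|E'|\ge\gamma$. This is a deep known result, and the paper's proof simply invokes it at this point. Your sketch does not prove it.

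\textbf{Why the compactness argument fails.}
\begin{itemize}
\item The sets $E'_j$ may depend on $P_j$, for instance as sublevel sets of $|P_j|$. Since $\chi_{E'_j}$ and $|P_j|^2$ converge only weakly, $\int\chi_{E'_j}|P_j|^2$ does not pass to the limit.
\item Saying the tail is ``equidistributed'' relative to such $E'_j$ is the statement to be proved. The $\Lambda(4)$ bound only gives the Paley--Zygmund conclusion, which you already saw is insufficient.
\item The off-diagonal terms $\sum c_n\overline{c_m}\,\widehat{\chi_{E'_j}}(\mu_m-\mu_n)$ are not uniformly small. $\widehat{\chi_{E'}}$ has no decay uniform over sets of measure $\ge\gamma$; for example, $E'=\{\cos(2\pi\mu y)>0\}$ gives $|\widehat{\chi_{E'}}(\mu)|\approx 1/\pi$.
\item If the $\ell^2$ mass escapes to high frequencies, the limit is $0$ and analyticity gives no contradiction.
\end{itemize}

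\textbf{Positivity of the spectrum does not rescue it.} The two-constants or harmonic-measure estimate controls $P$ inside the half-plane. That is a Poisson smoothing, which damps frequency $\mu$ by $e^{-2\pi\mu y}$, so it says nothing about the high-frequency part on $[0,1]$. Tur\'an--Remez bounds also degrade with the number of terms. Moreover, Lemma~\ref{lemmamain} does not assume positive frequencies; it sums over $|n|\ge M(L)$. In the paper, positivity is used only in the proof of Theorem~\ref{theorem:mainth}, where the two-constants lemma handles the finitely many low-frequency terms $n<M(L)$. Replacing your Step 3 by an appeal to Theorem~\ref{th:nazarov} completes your proof.
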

We will make use of the following result obtained by Nazarov  \cite{nazarov}.
    
\begin{theorem}\label{th:nazarov}

    Let 
$E\subset \mathbb{T}$ be of positive measure, and let 
$\Lambda$ be a Zygmund lacunary sequence.   
    Then
$$C(|E|,N)\sum_{n \in \Z} {|a_n|^2} \leq \int_{E}{\Big|\sum_{n \in \Z}{a_n e^{2 \pi i \lambda_n x}}\Big|^2 dx }.
$$

\end{theorem}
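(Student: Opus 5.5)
The statement to prove is Theorem~\ref{th:nazarov} itself. Write $P(x)=\sum_n a_n e^{2\pi i \lambda_n x}$ and $g=\widehat{\chi_E}$; think of $E$ as a subset of $[0,1)$. The plan splits $\Lambda$ into a finite ``head'' $\{\lambda_n : |n|<K\}$ and a ``tail'' $\{\lambda_n: |n|\ge K\}$. First we prove the inequality for the tail by almost-orthogonality. Then we add back the finitely many head frequencies by a finite-rank perturbation argument.

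\textbf{Step 1 (tail).} Expanding the square,
$$\int_E |P|^2=|E|\sum_n |a_n|^2+\sum_{k\neq l} a_k\overline{a_l}\, g(\lambda_l-\lambda_k).$$
Restrict to $|k|,|l|\ge K$. By Cauchy--Schwarz the off-diagonal sum is bounded by
$$\|a\|_{\ell^2}^2\Big(\sum_{k\ne l,\ |k|,|l|\ge K}|g(\lambda_l-\lambda_k)|^2\Big)^{1/2}.$$
The Zygmund condition says that every unit interval contains at most $N$ of the differences $\lambda_l-\lambda_k$. A Plancherel--P\'olya type bound, using $|g(t)|^2\le \int_{t}^{t+1}(|g|^2+|g'|^2)$ up to constants, then gives
$$\sum|g(\lambda_l-\lambda_k)|^2\lesssim N\int_{|\xi|\ge R_K}\big(|g|^2+|g'|^2\big)d\xi .$$
Here $g'$ is the Fourier transform of $-2\pi i x\chi_E\in L^2$, and $R_K$ is the minimal tail difference. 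The Zygmund condition forces only finitely many pairs to have bounded difference, so $R_K\to\infty$. Hence for $K$ large the off-diagonal term is at most $\tfrac{|E|}{2}\|a\|^2$, and the tail satisfies $\int_E|P_{\rm tail}|^2\ge \tfrac{|E|}{2}\|a_{\rm tail}\|^2$.

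\textbf{Step 2 (adding the head).} Let $R:\ell^2\to L^2(E)$ be the map $a\mapsto P|_E$. By Step~1, $R$ is bounded below on the tail subspace. It is bounded on all of $\ell^2$ because the Zygmund condition gives the upper Bessel bound on $[0,1]$. Adding the finite-dimensional head makes $R$ a finite-rank perturbation of an operator with closed range, so $R$ has closed range. It is therefore bounded below as soon as it is injective.

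Injectivity means the following: if $P=0$ a.e.\ on $E$, then $a=0$. I would prove this by induction, removing one frequency at a time. Multiply $P$ by $e^{-2\pi i\lambda_m x}$ and apply a difference operator $h\mapsto h(x+\delta)-h(x)$ on a subset $E\cap(E-\delta)$ of positive measure, with $\delta$ chosen small. This annihilates the $\lambda_m$ term. It multiplies the other coefficients by $e^{2\pi i(\lambda_n-\lambda_m)\delta}-1$, which can be kept bounded away from $0$ on the tail for generic $\delta$, and it preserves the tail estimate of Step~1. An alternative is Nazarov's Tur\'an-type lemma for the finite head: $\sup_{[0,1]}|Q|\le (C/|E|)^{K}\sup_{E'}|Q|$ for $Q$ a $K$-term exponential sum, applied after controlling the tail on a large subset $E'\subset E$ via the $L^4$ bound.

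\textbf{Step 3 (uniform constant).} The theorem asks for a constant depending only on $|E|$ and $N$. I would argue by contradiction. Suppose sets $E_j$ with $|E_j|\ge\epsilon$, sequences $\Lambda_j$ with constant $N$, and unit vectors $a^{(j)}$ satisfy $\int_{E_j}|P_j|^2\to0$. Pass to weak-$*$ limits $\chi_{E_j}\rightharpoonup w$, with $\int w\ge\epsilon$ and $0\le w\le 1$, and to limits of the coefficients and of the frequencies, after normalizing translations.

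The main obstacle is exactly this uniformity together with the injectivity in Step~2. The head length $K$ in Step~1 depends on the decay of $\widehat{\chi_E}$ and not just on $|E|$. The Tur\'an-lemma route is the one I would try first, because its constant depends only on $K$ and $|E|$. It then remains to bound $K$ in terms of $N$ and $|E|$, using the $L^4$ estimate $\|P\|_4^4\lesssim N\|a\|^4$ that the Zygmund condition provides.
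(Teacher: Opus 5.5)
The paper gives no proof of this statement; it imports it from Nazarov \cite{nazarov}. Your Steps 1--2 establish only the qualitative inequality $\int_E|P|^2\ge c(E,\Lambda)\sum_n|a_n|^2$, essentially the classical Zygmund-type statement with the finitely many initial frequencies restored. Two repairs are needed there:
\begin{itemize}
\item The multipliers $e^{2\pi i(\lambda_n-\lambda_m)\delta}-1$ are in general not bounded away from $0$: for almost every $\delta$ the numbers $(\lambda_n-\lambda_m)\delta$ are dense modulo $1$. Injectivity only needs them nonzero, which fails for just countably many $\delta$.
\item ``Preserves the tail estimate'' is not automatic, because $K$ depends on the set. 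You must keep $|E\setminus E'|$ small and use $\sum_{k\ne l}|\widehat h(\lambda_l-\lambda_k)|^2\lesssim N\|h\|_2^2$ for $h=\chi_E-\chi_{E'}$, so that the same $K$ still works on $E'$.
\end{itemize}
The actual content of the theorem is that the constant depends only on $|E|$ and $N$. This is exactly what the paper uses in Lemma~\ref{lemmamain}, where the theorem is applied to the rescaled sets $\{x\in[0,1]: x/L\in E\}$ and to the sequences $(\lambda_n/L)_{|n|\ge M(L)}$, both of which change with $L$. That is your Step 3, and it is not carried out.

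Neither remedy you sketch closes this gap.
\begin{itemize}
\item The closed-range argument is soft and produces no constant.
\item The head length $K$ from Step 1 is governed by the high-frequency part of $\widehat{\chi_E}$, which $|E|$ does not control. For $E=\{x\in[0,1):\ x/h-\lfloor x/h\rfloor<\epsilon\}$ one has $\int_{|\xi|>R}|\widehat{\chi_E}|^2\approx\epsilon-\epsilon^2$ for every $R\ll 1/h$. So Step 1 forces $R_K\gtrsim 1/h$, which is unbounded as $h\to0$ with $|E|=\epsilon$ fixed.
\item The compactness argument fails for lack of compactness. Frequencies can run off to infinity, $a^{(j)}$ can tend weakly to $0$, and the weak-$*$ limit $w$ of $\chi_{E_j}$ need not be an indicator. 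Moreover, $\int\chi_{E_j}|P_j|^2$ does not pass to the limit when both factors converge only weakly, so no contradiction is obtained.
\item The Tur\'an route gives a factor $(C/|E|)^K$, which helps only if $K=K(|E|,N)$. In the complementary case, where the tail carries a fixed fraction of $\|a\|^2$, you still need $\int_E|P_{\mathrm{tail}}|^2\gtrsim\|a_{\mathrm{tail}}\|^2$. The $L^4$ estimate $\|P\|_4^4\lesssim N\|a\|^4$ is an upper bound and cannot supply this. You therefore fall back on Step 1 with its $E$-dependent $K$, and splitting the tail again only reproduces the problem.
\end{itemize}
What is missing is a scale-uniform mechanism for the frequencies at which $E$ is neither averaged out nor resolved. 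Supplying that is the nontrivial part of Nazarov's theorem.
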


\begin{proof}[Proof of Lemma \ref{lemmamain}]
Fix an interval $I=[0, L^{-1}]$ and  $y \in I$ to be chosen later. We have
\begin{align*} &\int_{I \cap E}{ {\bigg| \sum_{|n|\geq M(L)} f_n(x) e^{2 \pi i \lambda_n x} \bigg|^2 dx} } = 
\int_{I \cap E}\Big( \sum_{|n|,|m| \geq M(L)}{f_n(x) \overline{f_m}(x) e^{2 \pi i (\lambda_n - \lambda_m )x}} \Big)dx\\
&=\int_{I \cap E} \sum_{|n|,|m| \geq M(L)} {f_n(y) \overline{f_m}(y) e^{2 \pi i (\lambda_n - \lambda_m )x}}dx
\\
&+\int_{I \cap E} \sum_{|n|,|m| = M(L)}\Big(\int_{y}^{x}{(f_n \overline{f_m})'(t)}
dt\Big) e^{2 \pi i (\lambda_n - \lambda_n)x}dx
 =: K_1 + K_2.
   \end{align*}
   
First,
\begin{align*} K_1 &= \int_{I \cap E} \bigg| \sum_{|n|\geq M(L)} f_n(y) e^{2 \pi i \lambda_n x} \bigg|^2 dx =
\int_{0}^{{{1}/{L}}} \bigg| \sum_{|n|\geq M(L)} f_n(y) e^{2 \pi i \lambda_n x} \bigg|^2 {\chi_{E}(x)}dx 
\\
&= \int_{0}^{1} \bigg|{\sum_{|n|\geq M(L)}}f_n(y) e^{2 \pi i \frac{\lambda_n}{L} x} \bigg|^2 \chi _{E} {({\frac{x}{L}})}{dx \over L}.
 \end{align*}
Applying Nazarov's result (Theorem \ref{th:nazarov}) to the Zygmund lacunary sequence $(\lambda_n/L)_{|n| \geq M(L)}$ and noting that 
$$ 
\int_{0}^{1}\chi _{E} {({\frac{x}{L}})}dx = L \int_{0}^{{{1}/{L}}}{\chi_{E}(x)}dx 
 = L |{E}\cap I|
\geq \gamma,
$$
we have 
$$K_1 \geq {1 \over {C(\gamma)}} {1 \over L} {\sum_{|n| \geq M(L)}{|f_n(y)|^2}} \geq {1 \over {C(\gamma)}} \int_{0}^{{{1}/{L}}} \left( \sum_{|n|\geq M(L)}{|f_n(x)|^2} \right)dx,
$$
where   $y \in I$ is chosen so that the last inequality holds.

Second,
\begin{align*}
|K_2| &= \bigg|\sum_{|n|, |m|\geq M(L)} \left( \int_{0}^{y} + \int_{y}^{{{1}/{L}}}
\right) \left( {\int_{y}^{x}{(f_n \bar{f_m})'(t)dt}} \right) \chi_{E}(x) e^{2 \pi i (\lambda_n - \lambda_m)x} dx \bigg| \\
& \leq \Bigg| \sum_{|n|,|m| \geq M(L)}{\int_{0}^{y}{ \left( f_n \bar{f_m} \right)' (t) \int_{0}^{t}{\chi_{E}(x) e^{2 \pi i (\lambda_n - \lambda_m)x}}dxdt}} \Bigg| \\
&+ \Bigg| \sum_{|n|,|m|\geq M(L)}{\int_{y}^{{{1}/{L}}}{ \Big( f_n \bar{f_m} \Big)' (t) \int_{t}^{{{1}/{L}}}{\chi_{E}(x) e^{2 \pi i (\lambda_n - \lambda_m)x}}dxdt}} \Bigg| =: K_{21} + K_{22}.
\end{align*}
We observe that
\begin{align*}
 K_{21} &= \bigg| \int_{0}^{y}
 \Big(%\sum_{\substack{|n|, |m|\geq M(L)\\n\neq m}
\sum_{{}^{|n|,|m|\geq M(L)}_{n\neq m}}+\sum_{{}^{|n|,|m|\geq M(L)}_{n=m}}\Big)\left( f_n ' (t) \bar{f_m} (t) +  (\bar{f_m})'(t) f_n (t)\right)\\
& \qquad\qquad\qquad\qquad\qquad\times\widehat{\chi_{E \cap [0, t]}} (\lambda_n - \lambda_m)dt \bigg|\leq K_{211}+K_{212}.
\end{align*}
We begin  with the estimate of $K_{212}$:
\begin{align*}
    K_{212} &=\bigg| \int_{0}^{y}{\sum_{|n|\geq M(L)}{\left( f_n ' (t) \bar{f_n} (t) +  (\bar{f_n})'(t) f_n (t)\right) |E\cap [0,t]|dt}} \bigg|\\
    &\leq  L^{-1} \bigg| \int_{0}^{y}{\sum_{|n|\geq M(L)}{|f_n ' (t)|^2+ |{f_n} (t)|^2}} \bigg|,
\end{align*} 
where we have used that $|E\cap [0,t]|\leq L^{-1}$.

To find the desired  bound of $K_{211}$, by the  Cauchy-Schwarz inequality,
\begin{align*}
     K_{211} &\leq 2 \int_{0}^{y}{\left( \sum_{|m|\geq M(L)}{|f_m '|^2} \right)^{1 \over 2} \left( \sum_{|n|\geq M(L)}{|f_n|^2} \right)^{1 \over 2}}\\&\times  \left( \sum_{\substack{|n|, |m|\geq M(L)\\n\neq m}}{ \big| \widehat{\chi_{E \cap [0, t]}} (\lambda_n - \lambda_m)} \big|^2 \right)^{1 \over 2}dt.
\end{align*}

We now observe that, since
$(\lambda_n)$ is Zygmund lacunary,
the set $(\lambda_n - \lambda_m)_{\substack{n,m \\n \neq m}}$ can be written as the union of $N+1$ uniformly discrete sets. Then,  by the Plancherel-Polya inequality, we know that for any $g$ with $\spec g \subset[0,1]$
$$\sum_{n \neq m} \left |\widehat{g}(\lambda_n- \lambda_m)\right|^2 \lesssim \norm{g}^2_2.$$ In particular,
\begin{align*}
    \left( \sum_{\substack{|n|, |m|\geq M(L)\\n\neq m}}{ \big| \widehat{\chi_{E \cap [0, t]}} (\lambda_n - \lambda_m)} \big|^2 \right)^{1 \over 2}
    \leq C \| \chi_{E \cap [0, t]} \|_{2} \leq 
Ct^{1 \over 2} \leq C L^{-\frac12}.
\end{align*} 

Thus,
\begin{align*}K_{211} &\leq C L^{-\frac12} \int_{0}^{{{1}/{L}}}{\left( \sum_{|m|\geq M(L)}{|f_m '(t)|^2} \right)^{1 \over 2} \left( \sum_{|n|\geq M(L)}{|f_n(t)|^2} \right)^{1 \over 2}} dt \\&\leq CL^{-\frac12} \int_{0}^{{{1}/{L}}}\left( \sum_{|m|\geq M(L)}{|f_m '(t)|^2 + |f_m(t)|^2} \right) dt,
\end{align*}
which completes the estimate of $K_{21}$.
 $K_{22}$  can be estimated similarly, using
$$\left( \sum_{\substack{|n|, |m|\geq M(L)\\n\neq m}}{ \big| \widehat{\chi_{E \cap [t, L^{-1}]}} (\lambda_n - \lambda_m)} \big|^2 \right)^{1 \over 2} \leq C L^{-\frac12}.
$$
\end{proof}
The next result confirms Conjecture 
\ref{conj:Kovrizhkin}
 for  "tail" series. 

\begin{proposition}\label{vspom}
    Let $E$ be ($\Delta,\gamma$)-thick, and let $\Lambda$ be an
 ($M,N$)-strong Zygmund lacunary sequence. Then there exists $L(\Delta, \gamma,N)$ such that Conjecture \ref{conj:Kovrizhkin} holds for $F^c=\bigcup_{k=M(L)}^\infty [\lambda_k,\lambda_{k}+1].$
\end{proposition}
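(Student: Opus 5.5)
Write $F(x)=\sum_{n\geq M(L)} f_n(x)e^{2\pi i\lambda_n x}$ with $\spec(f_n)\subset[0,1]$. We must show $\|F\|_2^2\le C\int_E|F|^2$ for a suitable $L=L(\Delta,\gamma,N)$. The plan is to apply Lemma \ref{lemmamain} on each interval of a partition of $\mathbb{R}$ into intervals of length $L^{-1}$, sum over the partition, and then absorb the derivative error terms using Bernstein's inequality.

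\textbf{Step 1: passing to short intervals.} Thickness is given at scale $\Delta$, whereas the lemma needs intervals of length $L^{-1}$ with $|E\cap I|\ge\gamma'/L$. Partition $\mathbb{R}$ into intervals $I_j$ of length $L^{-1}$, with $L^{-1}\le\Delta$ and $L\Delta$ an integer. Each interval $J$ of length $\Delta$ in the natural coarser partition is a union of $L\Delta$ of the $I_j$. Since $|E\cap J|\ge\gamma\Delta$, at least a proportion $\gamma/2$ of these $I_j$ are \emph{good}, meaning $|E\cap I_j|\ge \frac{\gamma}{2L}$. Applying Lemma \ref{lemmamain} only on good intervals gives
\[
\int_E|F|^2\ \ge\ \sum_{j\ \mathrm{good}}\Big(C_1(\gamma/2)\int_{I_j}S(x)\,dx-\frac{C_2}{L^{1/2}}\int_{I_j}\big(S+S'\big)\Big),
\]
where $S=\sum_n|f_n|^2$ and $S'=\sum_n|f_n'|^2$. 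Bernstein's inequality gives $|f_n'|\lesssim|f_n|$ only in $L^2$ norm, so the $S'$ terms are summed over all $j$ and bounded by $\int_{\mathbb{R}}S'\le 4\pi^2\int_{\mathbb{R}} S$.

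\textbf{Step 2: controlling $\int_{\mathbb{R}}S$ by the good intervals.} This is the main obstacle: we need $\sum_{\mathrm{good}}\int_{I_j}S\gtrsim\int_{\mathbb{R}}S$, that is, the mass of $S$ must not hide in the bad intervals. First, $S$ is the square-sum of functions with spectrum in $[0,1]$, so $S$ is a sum of $|f_n|^2$, each of exponential type; hence $\sqrt{S}$ is locally comparable at scale $\le\Delta$ in an averaged sense. I would use the Logvinenko--Sereda theorem (Theorem \ref{th:logvinenko}) applied to each $f_n$ with the thick set $G=\bigcup_{\mathrm{good}} I_j$. This set $G$ is $(\Delta',\gamma/2)$-thick for $\Delta'=2\Delta$, with constants independent of $L$. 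The result is $\int_{\mathbb{R}}|f_n|^2\le C(\gamma,\Delta)\int_G|f_n|^2$, and summing over $n$ gives $\int_{\mathbb{R}}S\le C\int_G S$.

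\textbf{Step 3: conclusion.} Combining Steps 1 and 2,
\[
\int_E|F|^2\ge \Big(\frac{C_1(\gamma/2)}{C(\gamma,\Delta)}-\frac{C_2(1+4\pi^2)}{L^{1/2}}\Big)\int_{\mathbb{R}}S.
\]
Choose $L$ large in terms of $\gamma,\Delta,N$ so that the bracket is positive. Note that $C_2$ depends only on $N$, through the Plancherel--Polya bound, which requires uniform discreteness of the differences $\lambda_n-\lambda_m$ with $n,m\ge M(L)$; this holds uniformly in $L$ by strong Zygmund lacunarity.

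It remains to check $\|F\|_2^2\lesssim\int_{\mathbb{R}}S$. This is an almost-orthogonality statement: $\widehat{F}$ is supported in the intervals $[\lambda_n,\lambda_n+1]$, which overlap boundedly because $\Lambda$ is Zygmund lacunary (hence uniformly discrete up to $N$ repetitions). Therefore $\|F\|_2^2\le N'\sum_n\|f_n\|_2^2$. This completes the argument. Approximating $f_n$ by smooth $L^2$ functions of the same spectrum justifies using the lemma.
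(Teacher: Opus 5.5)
Your proposal is correct and follows the paper's proof essentially step for step. You subdivide the $\Delta$-intervals into pieces of length $L^{-1}$, apply Lemma \ref{lemmamain} on the good pieces, use Logvinenko--Sereda for each $f_n$ on the union of good pieces (thick at scale $2\Delta$), absorb the derivative terms via Bernstein, and take $L$ large; you also make explicit the bounded-overlap estimate $\|F\|_2^2\lesssim\sum_n\|f_n\|_2^2$, which the paper leaves implicit.

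The only slip is minor: the union of good pieces is guaranteed to be $(2\Delta,\gamma/4)$-thick, not $(2\Delta,\gamma/2)$-thick. This is harmless, since the thickness constant is still independent of $L$.
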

\begin{proof}
First we write 
$$ \mathbb{R} = \bigsqcup_{k \in \mathbb{Z}} \big[ k \Delta, (k+1) \Delta \big] = :\bigsqcup_{k \in \mathbb{Z}} J_{k}.$$
Divide each $J_{k}$ into intervals of length $1/L$, namely, $(I_{k}^{s})_{s=1}^{S}$ with $S=L \Delta$, where $L$ will be chosen later.

We say that $j$ is a {\it good} number (written $j \in G$) if
$$ |I_{k}^{j} \cap E| > {\gamma \over 2} |I_{k}^{j}|.$$
Otherwise, we say  that $j$ is a {\it bad} number $(j \in B)$.
% if 
%$$ |I_{k}^{j} \cap E| \leq {\gamma \over 2} |I_{k}^{j}|$$

We observe that 
\begin{align}
    \# G + \# B = L \Delta
\end{align}

and 
\begin{equation}
    \label{1} \# G > C(\gamma) L\Delta  .
    \end{equation}
 Indeed,  from our assumptions,
$$ \# G \cdot {1 \over L} + \# B {{\gamma \over 2} \over L} \geq \gamma \Delta$$
and hence \eqref{1} holds for 
$C(\gamma) = {{\gamma \over 2} \over {1 - {\gamma \over 2}}}.$

We apply Lemma \ref{lemmamain} to each $I_{k}^{j}$ for $j \in G$ as follows 
\begin{align}
\label{2terms}
\int_{I_{k}^{j} \cap E} \big| \sum_{|n|\geq M(L)}f_n (x) &e^{2 \pi i \lambda_n x} \big|^2 dx 
\geq C_1 ({\gamma \over 2}) \int_{I_{k}^{j}} \sum_{|n|\geq M(L)}{|f_n(x)|^2}dx \\&- {C_2 \over L^{\frac12}}\int_{I_{k}^{j}} \left( \sum_{|n|\geq M(L)}{|f_n (x)|^2 + |f_n ' (x)|^2} \right)dx.
\end{align}
We have 
\begin{align*} \int_{E}{ \bigg| \sum_{|n|\geq M(L)}{f_n (x) e^{2 \pi i \lambda_n x}} \bigg|^2 }dx& \geq \int_{
\cup_{k}\cup_{j\in G} I_{k}^{j}\cap E
}{ \bigg| \sum_{|n|\geq M(L)}{f_n (x) e^{2 \pi i \lambda_n x}} \bigg|^2 }dx
\\& =
\sum_{k \in \mathbb{Z}} \sum_{j \in G}{\int_{I_{k}^{j} \cap E} \bigg| \sum_{|n|\geq M(L)}{f_n (x) e^{2 \pi i \lambda_n x}}} \bigg|^2 dx.
\end{align*}
Denote $$\Xi := \cup_{k} 
\cup_{j\in G}I_{j}^{k}.$$
We claim that 
$$ \int_{\Xi}{\sum_{|n|\geq M(L)}{|f_n(x)|^2 dx}} \geq %C_1 \left( {\gamma \over 2} \right) 
C_3(\bar{\gamma}, \bar{\Delta}) \sum_{|n|\geq M(L)} \int_{\mathbb{R}}{|f_n|^2}.$$ Indeed, by the Logvinenko--Sereda (Theorem 
\ref{th:logvinenko}) we need to check that for any interval $M$
\begin{align}
    \label{2}
    \big| \Xi \cap  M\big| \geq \bar{\gamma} |M|, \qquad|M|=\bar{\Delta}.
\end{align}

We claim that condition \eqref{2} holds with $\bar{\Delta} := 2\Delta$. Indeed, in this case $M$ contains $J_{k^*}$ with some  $k^* = k(M)$, that is, $J_{k^*} \subset {M}$.
Then $$\Xi \cap M \supseteq \Xi \cap J_k = \cup_{j \in G}{I_{k}^{j}}$$ and 
$$|\Xi \cap M| \geq \# G{1 \over L} \geq {C(\gamma) \over 2} \bar{\Delta}.$$

To estimate the second term in \eqref{2terms}, we use Bernstein's inequality to get
$${C \over L^{\frac12}} \int_{\mathbb{R}} \left( \sum_{|n|\geq M(L)}{|f_n^{}|^2 +|f_n'|^2} \right) dx  \leq {C' \over L^{\frac12}} \int_{\mathbb{R}} \sum_{|n|\geq M(L)}{|f_n|^2}  dx.
$$
Finally, by choosing $L$ large enough, we conclude that
\begin{align}    &\int_{E}{ \bigg| \sum_{|n|\geq M(L)}{f_n (x) e^{2 \pi i \lambda_n x}} \bigg|^2 }dx
%\int_{E}{\bigg| \sum_{|n|\geq M(L)}{f_n}(x) \bigg|^2 dx} 
\\&
\geq C(\gamma, \Delta) \int_{\mathbb{R}}{\sum_{|n|\geq M(L)}|f_n(x)|^2} dx - {C' \over L^{\frac12}} \int_{\mathbb{R}} \sum_{|n|\geq M(L)}{|f_n(x)|^2}  dx \\&\geq {C(\gamma, \Delta) \over 2}\int_{\mathbb{R}} \sum_{|n|\geq M(L)}{|f_n(x)|^2} dx.
\end{align}

\end{proof}
%\section{Main result }

The proof of Theorem \ref{theorem:mainth} will follow by combining Proposition \ref{vspom} with the well-known two-constants theorem, see \cite[p. 40]{HavinJoricke}.
    \begin{lemma}
       Let $E$ be ($\Delta,\gamma$)-thick. Then, for $g\in L^2(\mathbb{R})$ satisfying $\spec(g) \subset [0, \infty],$ one has
$$2\left(\int_{\mathbb{R}}{g^2(x)}dx\right)^{1-\sigma}\left(\int_{E}{|g(x)|^2}dx\right)^{\sigma} \geq  \int_{\mathbb{R}}{|Pg|^2(x)}dx,$$
 where $0<\sigma:=\sigma(\gamma,\Delta)\leq1$ and $\widehat{Pg}(\xi)=e^{-|\xi|} \widehat{g}(\xi)$ is the Poisson transform of $g$.
    \end{lemma}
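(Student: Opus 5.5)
This is the classical two-constants theorem, and the plan is to prove it by harmonic measure in the upper half-plane, using the thickness of $E$ to bound harmonic measure from below. Since $\spec(g)\subset[0,\infty)$, $g$ is the boundary value of the $H^2$ function of the upper half-plane
$$G(z)=\int_0^\infty \widehat g(\xi)e^{2\pi i \xi z}\,d\xi .$$
Up to the normalization of the exponent, $Pg(x)=G(x+ih_0)$ for a fixed height $h_0>0$. I will assume $\int_E|g|^2\le\int_{\R}|g|^2$, since otherwise the statement is trivial.

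First I would establish a pointwise two-constants bound. The function $\log|G|$ is subharmonic and is dominated by the Poisson integral of $\log|g|$. For $z=x+ih_0$ with Poisson kernel $P_z(t)$ this gives
$$\log|G(z)|\le \int_E P_z\log|g|+\int_{E^c}P_z\log|g| .$$
Set $\omega(z)=\int_E P_z$, the harmonic measure of $E$ seen from $z$. By thickness, every window of length $\Delta$ around $x$ carries $E$-mass at least $\gamma\Delta$. Since $P_z$ is comparable to $1/h_0$ on an interval of length $\sim h_0$ around $x$, and we can take $h_0\gtrsim\Delta$, we get $\omega(z)\ge\sigma_0(\gamma,\Delta)>0$ uniformly in $x$.

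To pass from this to $L^2$ norms, I would avoid pointwise data and work with $|g|^2$ and Jensen's inequality. Exponentiating and applying Jensen separately to the probability measures $P_z\,dt/\omega$ on $E$ and $P_z\,dt/(1-\omega)$ on $E^c$ gives
$$|G(z)|^2\le \Big(\tfrac{1}{\omega}\int_E P_z|g|^2\Big)^{\omega}\Big(\tfrac{1}{1-\omega}\int_{E^c}P_z|g|^2\Big)^{1-\omega}.$$

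The main obstacle is that the exponent $\omega=\omega(x)$ varies with $x$, while the statement needs a fixed $\sigma$. I would handle this by writing $A(x)=\int_E P_z|g|^2$ and $B(x)=\int P_z|g|^2$, so that $A\le B$, and bounding the right-hand side by $C\,A^{\omega}B^{1-\omega}$. Because $A/B\le1$ and $\omega\ge\sigma:=\sigma_0$, we have
$$A^{\omega}B^{1-\omega}=B\,(A/B)^{\omega}\le B\,(A/B)^{\sigma}=A^{\sigma}B^{1-\sigma}.$$
The prefactors $\omega^{-\omega}(1-\omega)^{-(1-\omega)}$ are at most $2$, which explains the constant $2$ in the statement. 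Integrating in $x$ and applying Hölder with exponents $1/\sigma$ and $1/(1-\sigma)$ then gives
$$\int|Pg|^2\le 2\Big(\int A\Big)^{\sigma}\Big(\int B\Big)^{1-\sigma}.$$
Finally, since $\int P_z(t)\,dx=1$, we have $\int A\,dx=\int_E|g|^2$ and $\int B\,dx=\int|g|^2$, which is the claimed inequality.

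The steps that need care are the lower bound on $\omega$, which requires choosing the height so that the Poisson kernel sees a full $\Delta$-window, and the subharmonicity justification for $G\in H^2$, which is standard via the outer–inner factorization.
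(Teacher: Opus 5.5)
Your proof is correct. The paper does not prove this lemma; it invokes the classical two-constants theorem from Havin--J\"oricke. What you give is the standard argument behind it: the Poisson majorant of $\log|G|$, Jensen's inequality applied separately on $E$ and $E^c$, freezing the exponent via $A\le B$ and $\omega\ge\sigma$, and then H\"older in $x$. It also correctly explains the constant $2$ as $\sup_{0<\omega<1}\omega^{-\omega}(1-\omega)^{-(1-\omega)}$.

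The one inaccuracy is that the height is not at your disposal: $\widehat{Pg}=e^{-|\xi|}\widehat g$ forces $h_0=1/(2\pi)$. This is harmless. For any fixed height, $P_z\ge h_0/(\pi(h_0^2+\Delta^2/4))$ on $[x-\Delta/2,x+\Delta/2]$. So thickness gives $\omega\ge \gamma\Delta h_0/(\pi(h_0^2+\Delta^2/4))=:\sigma(\gamma,\Delta)$ with no restriction on how $h_0$ compares to $\Delta$.
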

 
\begin{proof}[Proof of Theorem \ref{theorem:mainth}]

  We will show that 
   there exists $C = C(\Delta, \gamma) \in (0, 1)$  such that 
$$ C  \int_{\mathbb{R}} \big| \sum_{n=0}^{\infty}{f_n (x) e^{2 \pi i \lambda_n x}} \big|^2 dx \leq \int_{\mathbb{R}} \big| \sum_{n=0}^{\infty}{f_n (x) e^{2 \pi i \lambda_n x}} \big|^2 \chi_{E}(x) dx
$$
for any $f_n$ satisfying $\spec(f_n) \subset [0, 1]$  and $\sum_{n=0}^\infty{f_n (x) e^{2 \pi i \lambda_n x}} \in L^2(\mathbb{R})$.

    Let $$f(x)=\left(\sum_{n=0}^{M(L)-1}+\sum_{n\geq M(L)}\right){f_n (x) e^{2 \pi i \lambda_n x}}=f_1(x)+f_2(x),$$ with $L$ from Proposition \ref{vspom} and  normalize $f$ so that $\norm{f}_2=1$.
   
     First, by the two-constants  and Plancherel's theorems, 
    $$C\norm{f_1}_2 \leq \norm{P(f)}_2 \leq 2 \norm{f \chi_E}_2^{\sigma}.$$

    Second, from Proposition \ref{vspom} and the triangle inequality, 
    $$C\norm{f_2}_2 \leq \norm{f_2 \chi_E}_2 \leq \norm{f \chi_E}_2 + \norm{f_1}_2 \leq C'(\norm{f \chi_E}_2 + \norm{f \chi_E}^\sigma_2)  .$$
    Therefore,
    $$1=\norm{f}_2 \leq \norm{f_1}_2+ \norm{f_2}_2\leq C'(\norm{f \chi_E}_2 + \norm{f \chi_E}^\sigma_2),$$ that is, 
    $$\norm{f \chi_E}_2 \geq C,$$
    completing the proof. 
    \end{proof}
\section{Further remarks}
\subsection{Sets of uniqueness for lacunary spectra}
\label{uniq sets}
Recall that 
$\Sigma$ is  a uniqueness set  for $\mathcal{C}$
if $f=0$ on $\Sigma$ implies that $f\equiv 0$ for any $f\in \mathcal{C}$.
Since any  $f$ such that $\spec{f}\subset [0,1]$ is analytic, any open  set (in fact, any infinite set with accumulation points) on $\R$ is a uniqueness set. 
Here we study 
uniqueness  sets  for functions with  lacunary spectra.

%An immediate consequence from  the Logvinenko–Sereda theorem is that  any  thick set $\Sigma$ is  a uniqueness set  for $f$ such that $\spec{f}\subset [0,1]$, that is,  $f=0$ on $\Sigma$ implies that $f\equiv 0$. In fact, we can say much more on 
%sets of uniqueness for functions with  lacunary spectra. 
% In this section we study the problem on the sets of uniqueness, if  
\label{section:uniqness}
\begin{theorem}
\label{th:uniqueness}Let $f\in L^2(\R)$ be such that $\widehat{f}$ is supported on the set $$\Sigma:=
\bigcup_{n\in \N} [-\lambda_n-1,-\lambda_n]\cup[\lambda_n,\lambda_{n}+1].$$ Assume further that 
\begin{equation}\label{eq:condition onlambda}
\lambda_n(1+\frac{1}{\log\lambda_{n}})<\lambda_{n+1}(1-
\frac{1}{\log\lambda_{n+1}}
)\quad \mbox{and}\quad \sum_{n\in\N} \frac{1}{\log^{2}\lambda_{n}}<\infty.
\end{equation}
Then if $f=0$ on an open set, then $f\equiv 0$.
\end{theorem}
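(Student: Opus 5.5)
We argue by contradiction: we show that if $f\in L^2(\R)$ has $\widehat f$ supported on $\Sigma$ and vanishes on an interval $J$, then $\widehat f\equiv 0$. The approach is to recover each block $\widehat f\chi_{[\pm\lambda_n,\pm\lambda_n+1]}$ from $f|_J$ by a local filtering argument, using the gaps provided by \eqref{eq:condition onlambda}.

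Write $f=\sum_n (g_n+h_n)$, where $\widehat{g_n}=\widehat f\chi_{[\lambda_n,\lambda_n+1]}$ and $\widehat{h_n}=\widehat f\chi_{[-\lambda_n-1,-\lambda_n]}$. The first condition in \eqref{eq:condition onlambda} says that the frequency window $W_n=[\lambda_n(1-1/\log\lambda_n),\lambda_n(1+1/\log\lambda_n)]$ contains no other $\lambda_m$, so $\pm[\lambda_n,\lambda_n+1]$ is separated from the rest of $\Sigma$ by a gap of length about $\lambda_n/\log\lambda_n$. We may assume $J\supset[-2a,2a]$ for some $a>0$.

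\emph{Step 1 (localized multiplier).} We build, for each $n$, a Schwartz-type multiplier $\phi_n$ whose kernel $K_n=\check\phi_n$ is supported in $[-a,a]$, with $\phi_n\approx 1$ on $[\lambda_n,\lambda_n+1]$ and $\phi_n$ small on $\Sigma\setminus[\lambda_n,\lambda_n+1]$. Since $\phi_n$ is the Fourier transform of a compactly supported function, it is entire of exponential type $a$, so we cannot demand exact vanishing off a window. What we can do is take $K_n(x)=e^{2\pi i(\lambda_n+1/2)x}\psi_n(x)$, where $\psi_n$ is supported in $[-a,a]$ and $\widehat{\psi_n}$ concentrates on a frequency window of width $w_n=\lambda_n/\log\lambda_n$. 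By Beurling--Malliavin or Ingham-type constructions, a compactly supported function can have Fourier decay like $\exp(-c|\xi|/\log|\xi|)$ (the corresponding weight $\omega(\xi)=|\xi|/\log^2|\xi|$ is integrable against $d\xi/\xi^2$ in the appropriate sense). Equivalently, we choose $\widehat{\psi_n}(\xi)=\widehat\psi(\xi/\delta_n)\cdot$(normalization), with $\psi$ having such decay and with dilation parameters tuned so that $\sum_n$ of the tail errors is finite. This is where $\sum 1/\log^2\lambda_n<\infty$ should enter, for instance via an infinite-product construction $\widehat\psi=\prod_n \widehat{\eta}(\xi\,\epsilon_n)$ with $\sum\epsilon_n<\infty$ and $\epsilon_n\sim 1/(\lambda_n\log^{-2}\lambda_n\cdot\ldots)$.

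\emph{Step 2 (vanishing).} Because $K_n$ is supported in $[-a,a]$ and $f=0$ on $J$, the convolution satisfies $f*K_n=0$ on $[-a,a]$. On the other hand,
\[
f*K_n = g_n\,\phi_n\text{-part} + \text{error}_n ,
\]
where the error is controlled by $\sup_{\Sigma\setminus[\lambda_n,\lambda_n+1]}|\phi_n|\cdot\|f\|_2$.

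\emph{Step 3 (conclusion).} The function $g_n$ is band-limited to an interval of length $1$, hence analytic of exponential type, so the Logvinenko--Sereda theorem (Theorem~\ref{th:logvinenko}) and Bernstein estimates give quantitative control: small values on $[-a,a]$ together with the frequency localization force small $L^2$ norm only up to constants depending on $a$. Exact vanishing would follow if the error were $0$, which it is not, so Step 3 must be done more carefully. We will instead consider the whole family $\{f*K_n\}$ and show that the map $f\mapsto (f*K_n|_{[-a,a]})_n$ is injective via an almost-orthogonality / Neumann-series argument: $f*K_n|_{[-a,a]}=T_n g_n+\sum_{m\ne n}E_{nm}(g_m+h_m)$ with $\sum_{n,m}\|E_{nm}\|^2<\infty$, so that the system is a compact perturbation of a diagonal one. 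We then invoke a Fredholm-type argument to conclude that all $g_n,h_n$ vanish.

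The main obstacle is Step 1: producing compactly supported kernels whose Fourier transforms decay enough to separate blocks with gaps only of order $\lambda_n/\log\lambda_n$, with summable errors. The conditions in \eqref{eq:condition onlambda} are precisely of Beurling--Malliavin type, so I would try a Paley--Wiener multiplier built from an infinite product adapted to the sequence $(\lambda_n)$.
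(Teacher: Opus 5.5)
\paragraph{There is a genuine gap, and it is in Step 3, where the proof actually has to happen.}

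Your diagonal maps $T_n\colon g_n\mapsto (g_n*K_n)|_{[-a,a]}$ are injective by analyticity. They are also compact: restricting functions with spectrum in an interval of length $1$ to $[-a,a]$ is Hilbert--Schmidt, with kernel $\chi_{[-a,a]}(x)\,\check\chi_{[0,1]}(x-y)\in L^2(\R^2)$. Hence they are not bounded below. A translate of a fixed band-limited bump, moved far from the origin, has norm $1$ and is arbitrarily small on $[-a,a]$.

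Consequently no Fredholm or compact-perturbation argument applies to $(T_n)+(E_{nm})$. The unperturbed part does not have closed range, so it is not Fredholm. Even for a Fredholm operator, a compact perturbation can create a kernel. No rate of decay of the $E_{nm}$ can compensate for the missing lower bound.

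For the same reason the Logvinenko--Sereda theorem is irrelevant here. The set $[-a,a]$ is not thick, and there is no constant $C_a$ with $\norm{h}_2\le C_a\norm{h\chi_{[-a,a]}}_2$ for $\spec h\subset[0,1]$.

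Step 1 also contains a false claim. No nonzero compactly supported $\psi$ can satisfy $|\widehat\psi(\xi)|\le e^{-c|\xi|/\log|\xi|}$ for all large $|\xi|$. This is because $\int\log|\widehat\psi(\xi)|\,\frac{d\xi}{1+\xi^2}>-\infty$, while $\int^\infty\frac{d\xi}{\xi\log\xi}=\infty$. The weight $|\xi|/\log^2|\xi|$ you mention is admissible, but it gives strictly weaker decay.

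\paragraph{The missing idea: quasianalyticity of a single convolution.}

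That borderline is exactly what the paper exploits. Instead of a family of kernels, it uses one Beurling--Malliavin multiplier whose decay is strong only on $\Sigma$. The weight is
$\omega(x)=\sum_n\frac{\lambda_n}{\log\lambda_n}\phi\big(\frac{\log\lambda_n}{\lambda_n}(x-\lambda_n)\big)$.
By the first condition in \eqref{eq:condition onlambda} it consists of disjoint bumps, so it is Lipschitz. By the second condition, $\int_1^\infty\omega(x)x^{-2}\,dx\approx\sum_n 1/\log^2\lambda_n<\infty$.

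Beurling--Malliavin therefore gives $g$ supported in $[-r/2,r/2]$ with $|\widehat g(\xi)|\le e^{-\omega(|\xi|)}\lesssim e^{-|\xi|/\log(e+|\xi|)}$ for $\xi\in\Sigma$. Then $F=f*g$ vanishes on $|x|<r/2$. Since $\widehat f$ lives on $\Sigma$, we get $|\widehat F|\lesssim|\widehat f|\,e^{-|\xi|/\log(e+|\xi|)}$ on all of $\R$. This places $F$ in a Denjoy--Carleman quasianalytic class, because $\int^\infty\frac{dt}{t\log t}=\infty$.

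Hence $F\equiv0$. Since $\widehat g$ is entire and not identically zero, $\widehat f=0$ a.e. Uniqueness thus comes from a qualitative quasianalyticity property of the single function $f*g$. It does not come from separating the blocks and recovering each one quantitatively, which, as explained above, cannot work.
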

\begin{proof}
Assume that $f$ vanishes on the ball $|x|<r$.

Let $\phi$ be a smooth function supported on $[-1,1]$ and satisfying $\phi=1$ for $|x|<\frac12$. Set
$$\omega(x)=\sum_{n\in \N} \frac{\lambda_n}{\log \lambda_n}\phi\Big( \frac{\log \lambda_n}{\lambda_n}(x-\lambda_n)\Big).$$
   Observe that, by \eqref{eq:condition onlambda}, the supports of the summands in this series are pairwise disjoint. Hence $\|\omega'\|_\infty \leq \|\phi'\|_\infty$, so $\omega$ is a Lipschitz function. Moreover, 
    $$\int_1^\infty \frac{\omega(x)}{x^2} dx \approx \sum_{n\in\N} \frac{1}{\log^{2}\lambda_{n}}<\infty$$ 
and
\begin{equation}
    \label{property}
\frac{|x|}{\log (e+|x|)}\leq     C+\omega(|x|), \qquad x \in \Sigma. 
\end{equation}
    Then, by the Beurling-Malliavin theorem (see, e.g., \cite{seventh}), there exists a function $g$ supported on $[-r/2,r/2]$ such that $$|\widehat{g}(x)|\leq e^{-\omega (|x|)}.$$
Since  $f$ vanishes on 
$|x|<r$
and the support of $g$ is $[-r/2,r/2]$, the function $f * g$ vanishes for $|x|<\frac{r}{2}$. 

    We now proceed to show that $$F=f*g$$ is quasi-analytic, and thus vanishes everywhere. We follow the approach of the paper \cite{jayemitkovski}.
    
Observe that $|\widehat{F}(\xi)|=|\widehat{f}(\xi) \widehat{g}(\xi)|\lesssim |\widehat{f}(\xi)| e^{-\frac{|\xi|}{\log(e+ |\xi|)}},$ because $\widehat{f}(\xi)\neq 0$ only for $\xi \in \Sigma$ and, in light of \eqref{property}, $$e^{-\omega (|\xi|)} \lesssim e^{-\frac{|\xi|}{\log(e+ |\xi|)}}=:1/W(\xi), \qquad \xi \in \Sigma.$$ 
Note that $W$ 
is  non-decreasing, continuous, log-convex and $W(0)=1$, $W(\infty)=\infty$   (that is, 
$W$
satisfies conditions (1) and (2) of Theorem 1.3 in \cite{jayemitkovski}). Thus, following that article, and
defining $$M_n=\sup_{\xi\geq 1}\frac{\xi^n}{W(\xi)}$$ 
and $$\mu_n=\frac{M_{n-1}}{M_n},
$$ we have that $M_n$ is increasing and log-convex, and that $\mu_n$ is non-increasing.

Next, by Plancherel's formula,
$\norm{ F^{(n)}}_2 \lesssim (2 \pi)^n M_n$ for any $n\geq 0.$ Therefore, by using the Sobolev inequality

$$\norm{h}_\infty\lesssim \norm{h}_2 + \norm{h'}_2$$ we derive that $\norm{ F^{(n)}}_\infty \lesssim (2 \pi)^n M_{n+1}$.

Now we  in a position  to use  the Carleman-Denjoy condition for quasianaliticity of $F$, which is 
$$\sum_{n=1}^\infty \mu_n=\infty.$$ Under the conditions of Theorem 1.3 in \cite{jayemitkovski}, by Proposition 2.2 in \cite{jayemitkovski} we know that

$$\sum_{n=1}^\infty \mu_n=\infty \iff \int_1^\infty \frac{\log W(t)}{t^2} dt =\infty.$$ In our case the last condition  is valid since 
$$\int_1^\infty \frac{\log W(t)}{t^2} dt=\int_1^\infty \frac{d\xi}{\xi \log(e+\xi)}=\infty.$$

In conclusion, $F$ is a quasi-analytic function which vanishes on an open set, thus it is zero. This means that $\widehat{F}=\widehat{f} \cdot \widehat{g}$ is zero almost-everywhere. Since $\widehat{g}$ is band-limited, and thus can only have finitely many zeros on each interval, this implies that $\widehat{f}$ vanishes almost everywhere.
\end{proof}

Note that $\lambda_n= 2^{\phi(n)}$ satisfies \eqref{eq:condition onlambda} for 
$\phi(n)={n}^\alpha \log^\beta n$, $\alpha>\frac12, \beta\in \R$ or 
$\alpha=\frac12, \beta>\frac12$.

\iffalse
$\phi(n)=\sqrt{n} \log^\delta n$, $\delta>\frac12$.

{\bf !!}
Note that any sequence $\lambda$ that increases as 
$2^{\phi(n)}$, with $\phi(n)={n}^\alpha \log^\beta n$, $\alpha>\frac12, \beta\in \R$ or 
$\alpha=\frac12, \beta>\frac12$.

$\delta>\frac12$,
or faster,
satisfies \eqref{eq:condition onlambda}.
\fi

\subsection{Some remarks about strong lacunary sequences}
\label{section:4}
In this section, we show that the class of strong Zygmund lacunary sequences lies strictly between the classes of Hadamard and Zygmund lacunary sequences.
\begin{proposition}
 \label{pr:notstrong}   Define $$\lambda=(\lambda_k)_{k=1}^\infty=(\lambda_k^j)_{1\leq k<\infty, 0\leq j\leq 1} = (4^k +j k)_{1\leq k<\infty, 0\leq j\leq 1}.$$
    Then $\lambda$ is  Zygmund lacunary but not strong Zygmund lacunary.

\end{proposition}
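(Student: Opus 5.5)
We treat the two claims separately; the failure of strong lacunarity is the easy part. Throughout, a difference of two terms of $\lambda$ from blocks $a$ and $b$ has the form
$$\lambda^{j}_a-\lambda^{j'}_b=4^a-4^b+(ja-j'b),$$
which is a main term $4^a-4^b$ plus a perturbation of size at most $\max(a,b)$.

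\emph{Not strong Zygmund lacunary.} Fix $L$ and any candidate $M(L)$. Take $k\ge M(L)$ large and the pair $(\lambda^1_k,\lambda^0_k)$, whose difference is $k$. For every $k'\in[k,k+L]$ the pair $(\lambda^1_{k'},\lambda^0_{k'})$ has difference $k'$, and $|k-k'|\le L$. All these indices exceed $M(L)$, so the count in the definition is at least $L+1$. Hence no constant $N$ independent of $L$ can work.

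\emph{Zygmund lacunary.} Fix $k\neq l$, that is, fixed $(a,j)\neq(b,j')$, and count the pairs $(a',j'''),(b',j'')$ with
$$|4^a-4^b-4^{a'}+4^{b'}+\epsilon|\le 1,\qquad |\epsilon|\le 2\max(a,b,a',b').$$

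\emph{Same-block case.} If $a=b$, the difference is $\pm a$. A competitor from different blocks has modulus at least $3\cdot4^{\min}-2\max$, which is large unless the indices are bounded by an absolute constant. A competitor from the same block $a'$ needs $|a-a'|\le1$. So there are $O(1)$ solutions.

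\emph{Different-block case.} Assume $a>b$ (the case $a<b$ is symmetric) and, by symmetry, $a'>b'$; pairs with the opposite orientation give differences of the wrong sign and large modulus, except for bounded indices. I then proceed in three steps.
\begin{enumerate}
\item[(1)] If $a'\ne a$, say $a'<a$, then $4^a-4^b-4^{a'}+4^{b'}\ge 4^a-2\cdot4^{a-1}=4^a/2$, which exceeds $2a+1$ once $a$ exceeds an absolute constant. The case $a'>a$ is symmetric. So, up to $O(1)$ exceptional configurations, $a'=a$.
\item[(2)] With $a'=a$, the condition becomes $|4^{b'}-4^b+\delta|\le1$, where $\delta\in\{0,\pm a\}+O(b+b')$. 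If $\delta$ has no $\pm a$ part, then $|4^{b'}-4^b|\ge 3\cdot 4^{\min(b,b')}$ against $O(b+b')$ forces $b'=b$ or both indices bounded, giving $O(1)$ solutions.
\item[(3)] If $\delta$ contains a $\pm a$ part, the powers $4^{b'}$ must lie in a window of width $O(\log a)$ around the target $4^b\pm a$. Such $b'$ satisfy $4^{b'}\gtrsim a$, since otherwise the left side is about $a$. Consecutive powers $4^{b'}$ at this scale are separated by at least $3\cdot4^{b'-1}\gg\log a$, so only $O(1)$ values of $b'$ fit.
\end{enumerate}

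Summing over the finitely many choices of $j'',j'''$ gives a uniform bound $N$, so $\lambda$ is Zygmund lacunary. The main obstacle is the bookkeeping in step (3): the naive bound on $b'$ grows like $\log a$, and it is the geometric spacing of the powers $4^{b'}$, not a size estimate alone, that keeps the count bounded.
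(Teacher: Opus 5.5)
Your proof that $\lambda$ is not strong Zygmund lacunary is correct and is the paper's argument. The Zygmund part, however, rests on two false claims.

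In the same-block case you assert that a different-block competitor is far from $\pm a$ unless its indices are bounded by an absolute constant. But for $a=4^c-4$,
$$\lambda^1_a-\lambda^0_a=4^c-4=\lambda^0_c-\lambda^0_1,$$
so such competitors exist with top index $c\approx\log_4 a$ for arbitrarily large $a$ (e.g.\ $a=1020$, $c=5$). Your lower bound $3\cdot4^{\min(a',b')}-2\max(a',b')$ is negative when $\min(a',b')=1$. The correct comparison, $3\cdot4^{\max(a',b')-1}-\max(a',b')\le a+1$, only localizes $\max(a',b')$ near $\log_4 a$, and the partners still have to be counted.

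In step (3), the claim $4^{b'}\gtrsim a$ fails. The same identity rearranged, $\lambda^1_{1020}-\lambda^0_5=\lambda^0_{1020}-\lambda^0_1$, is a solution with $a=a'=1020$, $b=5$, $b'=1$, $j=1$, $j'''=0$, because the centre $4^b-a$ of your window can be tiny. Without that claim, a window of width $O(\log a)$ near the origin contains about $\log_4\log a$ powers of $4$, so the spacing argument gives no uniform bound.

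Two smaller points. In step (2) you need $|4^{b'}-4^b|\ge 3\cdot4^{\max(b,b')-1}$, not $3\cdot4^{\min(b,b')}$, to force $b'=b$. In the different-block case you silently dropped competitors with $a'=b'$; they add at most three solutions.

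The missing idea is the one the paper uses. Your $\delta$-bookkeeping discards it by treating $j''b'$ as an error of size $O(\log a)$ rather than as part of the sequence element $\lambda^{j''}_{b'}$. Once the larger block index of the competitor and its label are fixed ($O(1)$ choices), the other term must lie within distance $1$ of a determined number. Since the terms of $\lambda$ are distinct integers, there are at most three such terms.

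The paper organizes this as an exact count. Each $m$ with $4^n\le m<4^{n+1}$ has at most $5$ representations $m=\lambda^j_k-\lambda^{j'}_{k'}$: if $k>k'$ then $k\in\{n,n+1\}$ and $(k',j')$ is then determined, and if $k=k'$ then $k=m$. Since all differences are integers, tolerance $1$ allows at most three values of $m$. With this, your steps (2)--(3) reduce to one line after step (1). The same-block case becomes: localize $\max(a',b')$ to $O(1)$ values near $\log_4 a$, then count partners the same way.
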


\begin{proof}
    
Let $4^n \leq m < 4^{n+1}$, $n\geq 0$. We claim that there are at most 5 solutions to \begin{equation}
\label{check}
    m=\lambda_{k}^j - \lambda_{k'}^{j'}.
\end{equation}
First, for the case $k>k'$. If $k<n$, then $$\lambda_{k}^j - \lambda_{k'}^{j'}<4^n\leq m.$$ If $k\geq n+2$, then 
$$\lambda_{k}^j - \lambda_{k'}^{j'} \geq 4^{n+2} - 4^{n+1}-n-1\geq 4^{n+1}>m.$$
Thus, $k\in \{n,n+1\}$, which gives four possibilities.

Second, if $k=k'$, then $i=1$ and $i'=0$, so  $k=m$.
 Hence the claim is shown.
%In total there are at most 5 possible solutions to \eqref{check}.

The sequence $\lambda$  is not strong lacunary because for any $k,l$, one has $(\lambda_{k+l}^1- \lambda_{k+l}^0) -(\lambda_k^1 - \lambda^0_k )=l$, which in particular means that for any $L$ and $M$ $$\sup_{M\leq k\neq l \geq M}\# \left\{ (k',l')\geq (M,M) :|\lambda_{k} -\lambda_l -(\lambda_{k'}-\lambda_{l'})|\leq L\right\}\geq L.$$
\end{proof}

\begin{proposition}
 \label{pr:nothadamard}   There are strong Zygmund lacunary 
 sequences which grow polynomially.  In particular, they cannot  be represented as  a finite union of Hadamard sequences.

 %such that $\liminf\frac{\lambda_{n+1}}{\lambda_{n}}=1$.

\end{proposition}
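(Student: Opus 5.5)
The plan is to build the sequence greedily. The goal is integers $\lambda_1<\lambda_2<\dots$ with $\lambda_n\lesssim n^{5}$ whose nontrivial differences are not only distinct but separated by more than the largest index involved. Concretely, I will require for every $n$:
\begin{equation}
\lambda_n-\lambda_{n-1}>n,
\end{equation}
and, for all ordered pairs $(k,l)\neq(k',l')$ with $k\neq l$, $k'\neq l'$ and $\max(k,l,k',l')=n$,
\begin{equation}
\bigl|(\lambda_k-\lambda_l)-(\lambda_{k'}-\lambda_{l'})\bigr|>n .
\end{equation}

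\textbf{Why this gives strong Zygmund lacunarity.} Given $L$, set $M(L)=L+1$. Fix $k\neq l$ with $k,l\ge M(L)$, and let $(k',l')$ be any pair in the tail with $|\lambda_k-\lambda_l-(\lambda_{k'}-\lambda_{l'})|\le L$.
\begin{itemize}
\item If $k'=l'$, the inequality says $|\lambda_k-\lambda_l|\le L$. This is impossible, since by the spacing condition $|\lambda_k-\lambda_l|>\max(k,l)\ge L+1$.
\item If $k'\neq l'$ but $(k',l')\neq(k,l)$, the separation condition gives a gap larger than $\max(k,l,k',l')\ge L+1>L$, again a contradiction.
\end{itemize}
So the only admissible pair is $(k',l')=(k,l)$. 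The counting quantity in the definition is therefore at most $N=1$ for every $L$.

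\textbf{Inductive construction.} Suppose $\lambda_1,\dots,\lambda_{n-1}$ have been chosen. I list the values of $x=\lambda_n$ that would violate the conditions at level $n$, using that pairs of differences not involving $n$ were already handled at earlier stages.
\begin{itemize}
\item Comparing $x-\lambda_l$ with an old difference $d=\lambda_{k'}-\lambda_{l'}$ (all indices $<n$) forbids $x\in\lambda_l+d+[-n,n]$. There are about $n$ choices of $l$ and $n^2$ of $d$, so $O(n^4)$ forbidden integers. Comparisons with $\lambda_l-x$ are symmetric and give the same count.
\item Comparing $x-\lambda_l$ with $\lambda_{l'}-x$ forbids $2x\in\lambda_l+\lambda_{l'}+[-n,n]$, which is $O(n^3)$ values.
\item Comparing $x-\lambda_l$ with $x-\lambda_{l'}$ reduces to $|\lambda_{l'}-\lambda_l|>n$, which already holds by the spacing condition. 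The same is true when comparing $\lambda_l-x$ with $\lambda_{l'}-x$.
\item Comparing $x-\lambda_l$ with $\lambda_l-x$ (the reversed pair) requires $2(x-\lambda_l)>n$, which also follows from spacing.
\end{itemize}
In total at most $Cn^4$ integers are forbidden. Hence some admissible $x$ lies in $(\lambda_{n-1}+n,\ \lambda_{n-1}+n+Cn^4+1]$. Summing these increments gives $\lambda_n\lesssim n^{5}$, which is polynomial growth.

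\textbf{Not a finite union of Hadamard sequences.} A Hadamard sequence with ratio $q>1$ has $O(\log R)$ terms in $[1,R]$, and so does a finite union of such sequences. Our sequence has $\gtrsim R^{1/5}$ terms in $[1,R]$, so it cannot be such a union.

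The main obstacle is organising the case analysis in the inductive step so that every comparison involving index $n$ (including comparisons of a new pair with its reversal or with pairs sharing an index) is covered. The key point is that the number of forbidden values stays polynomial in $n$; the exact exponent does not matter.
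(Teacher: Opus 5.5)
Your invariant (2) cannot hold, and the bullet that is supposed to maintain it is false. Suppose the new index occupies the same slot in both pairs, say $(k,l)=(n,l)$ and $(k',l')=(n,l')$ with $l\neq l'<n$. Then the quantity in (2) equals $|\lambda_{l'}-\lambda_l|$. It does not involve $\lambda_n$, yet (2) demands that it exceed $n$. Your spacing condition only gives $|\lambda_{l'}-\lambda_l|>\max(l,l')$, which may be far below $n$. In fact no sequence satisfies (2): with $l=1$, $l'=2$ it would force $\lambda_2-\lambda_1>n$ for every $n\ge 3$. So the claim ``reduces to $|\lambda_{l'}-\lambda_l|>n$, which already holds by the spacing condition'' is wrong, and likewise for $l=l'=n$. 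The induction as written does not close.

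The repair is local:
\begin{enumerate}
\item Impose the separation (2) only for pairs with $k\neq k'$ and $l\neq l'$. This is the only situation where the choice of $\lambda_n$ actually matters.
\item Handle shared-slot pairs directly in the lacunarity check. If $k'=k$ and $l'\neq l$, then $|\lambda_l-\lambda_{l'}|>\max(l,l')\ge l\ge L+1$ by spacing, since $l$ is in the tail. The case $l'=l$, $k'\neq k$ is symmetric.
\end{enumerate}
This is how the paper argues. Its greedy rule is $\lambda_{n+1}\neq\lambda_i+\lambda_j-\lambda_k+p$ with $|p|\le L$, where $M(L)\le n<M(L+1)$. In the shared-index case it only needs $|\lambda_l-\lambda_{l'}|>L$ for the fixed tail level $L$, not a bound growing with the largest index.

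With this change the rest of your argument goes through: the $O(n^4)$ count of forbidden values, the bound $\lambda_n\lesssim n^5$, and the counting argument against finite unions of Hadamard sequences. For comparison, the paper lets $M$ grow fast and obtains $\lambda_n\le n^3F(n)$ for any $F\to\infty$. Note also that the crossed comparisons need no extra forbidding: $x-\lambda_j>n$ for all $j<n$ already gives $2x-\lambda_l-\lambda_{l'}>2n$.
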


\begin{proof}
   We construct a strong Zygmund lacunary set using a modification of the well-known greedy algorithm for constructing Sidon sets.

Let $M: \N \to \N$ be a given function increasing to $\infty$. 
    Set $\lambda_1=1$. Assume that $\lambda_1<\lambda_2< \dots<\lambda_{n}$ are given and let $L$ be such that $M(L)\leq n< M(L+1)$.  We take $\lambda_{n+1}$ to be the smallest integer so that $$\lambda_{n+1} \neq \lambda_i + \lambda_j -\lambda_k + p $$ for all $i,j,k\leq n$ and $|p|\leq L$. Clearly, $\lambda_{n+1}>\lambda_n$ and 
    %there is such an integer in the range $[1,2Ln^3+1]$, thus
    $$1\leq \lambda_{n+1}\leq (2L+1)n^3+1\quad \mbox{for}\quad n<M(L+1).$$
    
    We now show that such a sequence is strong Zygmund lacunary. Let $L \in \N$ and consider the sequence $(\lambda_n/L)_{n \geq M(L)}$. Let $k,l\in \N$, $k> l$, we will see that there is no other pair $k',l' \in \N$, $k'\geq l'\geq M(L)$ so that
    $|\lambda_l- \lambda_k - (\lambda_{l'}- \lambda_{k'})| \leq L$. Set $s=\max(k,l,k',l')$.
    
    If only one of the indices is equal to $s$, the assertion is clear by construction.
    
    If not, there are two possibilities: $s=k'=l'$ or $s=k'=k>l'$.
    In the former case, by construction of $\lambda_k$, $$|\lambda_l- \lambda_k - (\lambda_{l'}- \lambda_{k'})|=|\lambda_k -\lambda_l|>L.$$
     In the latter case, $$|\lambda_l- \lambda_k - (\lambda_{l'}- \lambda_{k'})|=|\lambda_l -\lambda_{l'}|>L,$$ unless $l=l'$.
    
    As a consequence, there exist strong Zygmund lacunary sequences satisfying $$\lambda_n \leq n^3 F(n)$$ for any arbitrary increasing function $F$ with 
$\lim_{n\to \infty}
F(n)=
\infty$. Such sequences cannot be expressed as a finite union of Hadamard sequences.

\end{proof}

%\subsection{Open questions}
%QUEStion 1\ref{conj:Kovrizhkin}
%QUEStion 2 remove the condition $\Lambda\subset \mathbb{R}_+$ in Theorem \ref{theorem:mainth}.

%Question 3. Prove Theorem \ref{th:uniqueness} for $f$ vanishing on set of positive measure.
\vspace{5mm}
\bibliographystyle{abbrv}
\bibliography{references.bib}
\end{document}